\documentclass[12pt]{amsart} 
\usepackage{amsmath,amstext,amscd, amssymb, color, txfonts} 
\usepackage[colorlinks=true, pdfstartview=FitV, linkcolor=blue, citecolor=blue, urlcolor=blue]{hyperref}

\definecolor{cerulean}{rgb}{0,.48,.65} 
\definecolor{magenta}{rgb}{.5,0,.5} 
\definecolor{dred}{rgb}{.5,0,0} 
\definecolor{green}{rgb}{0,.5,0} 
\definecolor{blue}{rgb}{0,0,1} 
\definecolor{black}{rgb}{0,0,0} 
\definecolor{dgreen}{rgb}{0,.3,0} 
\definecolor{vdred}{rgb}{.3,0,0} 
\definecolor{red}{rgb}{1,0,0} 
\definecolor{salmon}{rgb}{0.98,0.50,0.45} 
\definecolor{gray}{rgb}{.5,.5,.5} 
\definecolor{seagreen}{rgb}{0.13,0.70,0.67} 
\definecolor{chartreuse}{rgb}{0.40,0.80,0.00}
\definecolor{cornflower}{rgb}{0.39,0.58,0.93} 
\definecolor{gold}{rgb}{0.80,0.68,0.00}
 
\theoremstyle{plain}

\newtheorem{thm}{Theorem}[section]
\newtheorem{lemma}[thm]{Lemma}

\newtheorem{cor}[thm]{Corollary}
\newtheorem{prop}[thm]{Proposition}

\theoremstyle{definition}

\newtheorem{Open questions}[thm]{Open questions}
\newtheorem{Open question}[thm]{Open question}
\newtheorem{Open problems}[thm]{Open problems}
\newtheorem{Open problem}[thm]{Open problem}

\def\Bbb{\mathbb}

\def\Z{\Bbb{Z}}
\def\N{\Bbb{N}}

\def\ni{\noindent}

\def\Dist{\hbox{\rm Dist}}

\def\F+L{\hbox{$\textup{F}\!_+\textup{L}$}}

\def\ssm{\smallsetminus}

\def\supp{\hbox{\rm supp}}

\def\onto{{\kern3pt\to\kern-8pt\to\kern3pt}}

\def\<{\langle}
\def\>{\rangle}
\def\|{{\ |\ }}

\newcommand{\set}[1]{\left\{#1\right\}}

\newcommand{\abs}[1]{\left|#1\right|}
\renewcommand{\ni}{\noindent}

\def\*{^{\star}}

\setlength{\parindent}{0pt}
\setlength{\parskip}{7pt}

\begin{document}

\title[Exponentially distorted subgroups in wreath products]{Exponentially distorted subgroups \\ in wreath products}

\author{T.\ R.\ Riley \vspace*{5mm} \\  In memoriam Peter M.\ Neumann 1940--2020 }

\date \today

\begin{abstract}
\ni  
We exhibit exponentially  distorted subgroups in  $\Z \wr ( \Z \wr \Z )$ and $\Z \wr F_2$.    \\
\ni \footnotesize{\textbf{2020 Mathematics Subject Classification:  20F65, 20F10, 20F16
}}  \\ 
\ni \footnotesize{\emph{Key words and phrases:} wreath product, subgroup distortion}
\end{abstract}

\maketitle


\section{Introduction}

The main result of this paper is --

\begin{thm} \label{main thm} 
The subgroup $$H = \langle \,  x, \,  y,  \, [x,a]a, \,  [y,a]a \, \rangle$$ is exponentially distorted in $\Z \wr F_2$ where $\Z = \langle{a}\rangle$ and $F_2 = F(x,y)$.  The same is true in   $\Z \wr ( \Z \wr \Z) = \langle a  \rangle \wr ( \langle s  \rangle \wr \langle t  \rangle)$ with  $x=ts$ and $y=t$.    
\end{thm}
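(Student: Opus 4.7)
The plan is to analyze $H$ structurally and then exhibit matching upper and lower bounds for its distortion. Identifying the base group of $\Z \wr F_2$ with $R := \Z[F_2]$, under which conjugation by $w \in F_2$ becomes left multiplication, a direct computation gives $u_x := [x,a]a = 2 - x^{-1}$ and $u_y := [y,a]a = 2 - y^{-1}$ in $R$. Hence $H \cap R$ is the left ideal $I := Ru_x + Ru_y$, the quotient $R/I$ is the ring $\Z[1/2]$ under the augmentation $x, y \mapsto 1/2$, and so $H \cong I \rtimes F_2$. The second case ($x = ts$, $y = t$) is entirely analogous: $u_x, u_y$ lie in the base $\Z[\Z\wr\Z]$ and the corresponding augmentation sends $s \mapsto 1$, $t \mapsto 1/2$, yielding $H \cong I' \rtimes (\Z \wr \Z)$.

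My candidate witness is $h_n := (\delta_{x^n} - \delta_{y^n},\,1) \in H$, which belongs to $H$ because both $\delta_{x^n}$ and $\delta_{y^n}$ map to $2^{-n}$ in $\Z[1/2]$. The $G$-word $x^n a x^{-n}\, y^n a^{-1} y^{-n}$ immediately gives $|h_n|_G \le 4n + 2$, and iterating the identity $\delta_v = 2\delta_{vc} - (vc)u_c$ (valid in $I$ for $c \in \{x,y\}$) produces for every $h \in H$ an $H$-word of length at most $2^{O(|h|_G)}$, which yields the upper bound on the distortion. For the matching lower bound $|h_n|_H \ge c \cdot 2^n$, I would observe that any $H$-word of length $L$ representing $h_n$ assembles into integer multiplicities $a(w), b(w)$ (of signed $u_x$- and $u_y$-moves performed when the current top-group position is $w \in F_2$) satisfying the integer flow system
\[
2a(w) + 2b(w) - a(wx) - b(wy) \;=\; \delta_{x^n}(w) - \delta_{y^n}(w), \qquad w \in F_2,
\]
with $a, b$ finitely supported. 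Since $L \ge \sum_w (|a(w)| + |b(w)|)$, it suffices to show every such finite-support integer solution has $\ell^1$-cost at least $c \cdot 2^n$.

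The strategy for the $\ell^1$ lower bound is a descent along the Cayley tree of $F_2$: propagating outward from $x^n$ along the $x$-ray yields the recursion $a(x^{k+1}) = 2a(x^k) - \delta_{k,n}$, forcing magnitudes to double at each step, and escaping this doubling via a transversal $y$-branch replays the same phenomenon on a fresh ray; a $2$-adic bookkeeping argument then forces the total $\ell^1$-cost to be $\Omega(2^n)$. The argument transfers to $\Z \wr (\Z \wr \Z)$ because $x^n$ and $y^n$ still lie at distance $n$ in the Cayley graph of the top group $\Z \wr \Z$. The main obstacle is precisely this lower bound: the naive LP dual of the $\ell^1$-minimisation yields only a constant (fractional solutions such as $g_1 = \tfrac{1}{2}x^n + \tfrac{1}{4}x^{n-1} + \cdots$ have cost strictly less than $2$), so integrality combined with the non-abelian branching of $F_2$ must be exploited essentially. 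The analogous subgroup in the metabelian group $\Z \wr \Z$ is only polynomially distorted, confirming that the non-commutativity of $x$ and $y$ is exactly what drives the exponential distortion.
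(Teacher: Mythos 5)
Your structural setup is largely sound: the identification of the base group with $\Z[F_2]$, the computation $[x,a]a = 2 - x^{-1}$, the identification of $H\cap W$ with the one-sided ideal $I$ generated by $2-x^{-1}$ and $2-y^{-1}$, and the integer flow system for the lower bound are all correct reformulations. But there is a fatal error at the foundation of your lower bound: your witness $h_n = (\delta_{x^n}-\delta_{y^n},\,1)$ does not lie in $H$. The step that fails is the claim that $R/I\cong\Z[1/2]$; the ideal $I$ is strictly smaller than the kernel of the augmentation $v\mapsto 2^{-\theta(v)}$ (here $\theta$ is the height function $x,y\mapsto 1$). Concretely, let $Q$ be the set of nonempty reduced words whose first letter is $x$, and define $\chi(f) = \sum_{w\in Q}f(w)\,2^{-\theta(w)} \bmod \Z$, valued in $\Z[1/2]/\Z$. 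For every $w$ one checks $\chi(w u_x)=\chi(w u_y)=0$: the only place right multiplication by $x^{-1}$ or $y^{-1}$ crosses the boundary of $Q$ is $w=x$ for $u_x$, where the contribution is the integer $1$. So $\chi$ kills $I$, yet $\chi(\delta_{x^n}-\delta_{y^n}) = 2^{-n}\ne 0$ for $n\ge 1$. The geometric reason is that the upward cones of $x^n$ and $y^n$ in the tree are disjoint, so your doubling identity $\delta_v = 2\delta_{vc}-(vc)u_c$ can never merge the two deltas. The paper's witness avoids exactly this: it is $a^{-1}x^ny^{-n}a$, whose two lit lamps sit at $e$ and $x^ny^{-n}$, both at height $0$, with upward cones first meeting at $x^n$ at height $n$; its membership in $H$ is certified by an explicit word $\lambda_n\mu_n^{-1}$ of length roughly $2^{n+1}$.

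Second, even granting a correct witness, your lower bound is a plan rather than a proof: the recursion $a(x^{k+1}) = 2a(x^k)-\delta_{k,n}$ holds only for solutions supported on the $x$-ray, and the promised ``$2$-adic bookkeeping'' for general branched solutions is precisely the missing content. The paper's mechanism is worth internalizing: it chooses a set $P_n$ (in $F_2$, the elements all of whose prefixes have height $<n$) that can only be exited through height $n$, sets $m_i$ equal to the net signed number of $u_x$- and $u_y$-moves performed at height-$i$ points of $P_n$, and reads off from the flow equations that $2m_0-m_1=\pm 1$ while $2m_i=m_{i+1}$ for $1\le i\le n-2$; hence $m_1$ is odd, $|m_{n-1}|\ge 2^{n-2}$, and the word length is at least $\sum_i|m_i|\succeq 2^n$. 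This formulation also transfers cleanly to $\Z\wr(\Z\wr\Z)$, where your appeal to the distance between $x^n$ and $y^n$ is not the relevant property (in $\Z\wr\Z^2$ the corresponding distance is also linear in $n$ yet the distortion is linear, as the paper notes); what matters is that every path between the two lamp sites must climb to height $n$. Your outward-pushing upper bound is fine in outline and close in spirit to the paper's. You should also be aware that your identification of $I$ with the full augmentation kernel coincides with the intrinsic description asserted in the paper's Section~4 lemma, which by the computation above needs the same correction; the lower bound, which is the heart of the theorem, is unaffected.
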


Distortion of finitely generated subgroups in finitely generated groups is foundational and widely studied.  It compares a subgroup's word metric with the restriction of the word metric of the ambient group. For example, the subgroup $H = \langle a \rangle \cong \Z$ of  $G= \langle a, t  \mid t^{-1} a t = a^2 \rangle$ is said to be \emph{at least} exponentially distorted because for all $n \in \N$, the length-$2^n$ word $a^{2^n}$ equals the length-$(2n+1)$ word $t^{-n} a t^n$ in $G$; in fact, it \emph{is} exponentially distorted because,  moreover, there is a constant $C>0$ such that whenever a word  on $a$ and $t$ of length $k$  represents an element $a^L$ of $H$, we have $L \leq C^k$.  The Heisenberg group $G=\langle a ,b,c\mid [a,c], [b,c], [a,b]c^{-1} \rangle$ provides another example: its center    $\langle c \rangle \cong \Z$ is \emph{at least} quadratically distorted because $[a^n, b^n] = c^{n^2}$ in $G$ for all $n \in \N$; and, in fact, it \emph{is} quadratically distorted because, moreover, there exists $C>0$ such that for every word  on $a$, $b$, and $c$ of length $k$ that represents an element $c^L$ of $H$, we have $L \leq Ck^2$.

In some fundamental cases subgroup distortion is well-behaved. Subgroups of finitely generated free groups and of fundamental groups of closed hyperbolic  surfaces are undistorted \cite{Pittet2, Short2}. 
 Subgroups of finitely generated nilpotent groups are all at most polynomially distorted \cite{OsinDistNilp}.  But subgroup distortion can be wild even in some seemingly benign groups. There are subgroups of  $F_2 \times F_2$ and of rank-3 free solvable groups   whose distortion functions cannot be  bounded from above by a recursive function \cite{Mihailova, Umirbaev}.

Theorem~\ref{main thm}  is a next step in a direction of inquiry pursued by Davis and Olshanskii  \cite{DavisThesis, DO}.  They proved that every subgroup of $\Z \wr \Z$ is distorted like $n^d$ for some positive integer $d$ and, for all such $d$, they exhibited a subgroup  realizing that distortion.  Davis~\cite{DavisThesis} suggested next exploring subgroup distortion in $\Z \wr F_n$ and quoted speculation that an answer would be of interest for the study of von~Neumann algebras. Theorem~\ref{main thm} reveals a sharp contrast between subgroup distortion in $\Z \wr \Z$ and in $\Z \wr F_2$. Also, given that all finitely generated subgroups  in $\Z$ and in $F_2$ are undistorted and   in $\Z \wr \Z$ are at most polynomially distorted, it  shows that the wreath-product construction can give rise to substantial subgroup distortion.

The most novel feature of the work here is the idea behind the exponential lower bound (as proved in Section~\ref{exp lower bound}).  It relies on the observation that $F_2$ and   $\Z \wr \Z$ admit height functions (homomorphisms onto $\Z$) such that  for all integers $n>0$, there are pairs of height-$0$ elements a distance $2n$ apart with the property  that any path from one to the other travels up to height $n$ en route---see Proposition~\ref{pebbles prop}.

An intrinsic  description of the subgroup $H$ is not immediately evident from its definition in Theorem~\ref{main thm}.  Our proof of the exponential upper bound on its distortion in Section~\ref{exp upper bound}  includes such a  description.    

The second theorem of this article makes the point that our subgroups of Theorem~\ref{main thm} are necessarily delicate (given that  all subgroups are undistorted in $\Z$ and in $F_2$) in that exponentially distorted subgroups have to intersect both factors of the wreath product non-trivially and cannot be $\Z$-subgroups.  Closely related results can be found in \cite{BL-P}, which we recommend for a more detailed treatment than the proof we outline in Section~\ref{other subgroups}.

\begin{thm} (Cf.\ Burillo--L\'opez-Plat\'on \cite{BL-P})
\label{other subgroups thm}
Suppose $K$ is a finitely generated group  and $G = \Z \wr K$.  So, $G = W \rtimes K$ where $W =   \bigoplus_{K}  \Z$. Then --
\begin{enumerate}
\item All finitely generated subgroups $H$ of $W$ are undistorted in $G$ (meaning $\Dist^{G}_{H}(n) \simeq n$). \label{W case}
\item If $H$ is a finitely generated subgroup of $K$, then its distortion in $G$ is the same as its distortion in $K$ (more precisely, $\Dist^{G}_{H} \simeq \Dist^{K}_{H}$). In particular, $K$ is undistorted in $G$ (meaning $\Dist^{G}_{K}(n) \simeq n$). \label{K case}
\item Cases \eqref{W case} and \eqref{K case} give all possible distortion functions of $\Z$-subgroups of $G$.  In more detail, if $\hat{H} \cong \Z$ is a subgroup of $G$ then either $\hat{H}$ is undistorted in $G$ or there exists a subgroup $H  \cong \Z$  of $W$ or $K$  such that  $\Dist^{G}_{H} \simeq \Dist^{G}_{\hat{H}}$.  \label{Z case}
\end{enumerate}
\end{thm}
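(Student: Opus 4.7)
My plan is to treat the three parts in sequence, using the semidirect product structure $G = W \rtimes K$ with $W = \bigoplus_K \Z$ throughout. A basic tool is the retraction $\pi\colon G \to K$ that kills the lamp generator $a$ and is the identity on a chosen generating set of $K$; it does not increase word length, so $|\pi(g)|_K \le |g|_G$ for all $g \in G$.

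For (1), a finitely generated $H \le W$ lies in $\bigoplus_{k \in F} \Z$ for some finite $F \subset K$ (the union of the supports of a finite generating set). Since the $H$-word metric and the restricted $\ell^{1}$-norm $\|h\|_1 := \sum_{k \in F}|h(k)|$ are equivalent norms on the finite-dimensional real vector space $H \otimes \mathbb{R}$, it suffices to bound $\|h\|_1$ linearly in $|h|_G$; and counting $a^{\pm 1}$ letters in any $G$-word of length $n$ for $h$ gives $\|h\|_1 \le n$. Part (2) is immediate: for $h \in H \le K$, a $K$-word is a $G$-word so $|h|_G \le |h|_K$, while $\pi(h) = h$ yields $|h|_K = |\pi(h)|_K \le |h|_G$; hence $|h|_G = |h|_K$ on $H$ and $\Dist^{G}_{H} \simeq \Dist^{K}_{H}$.

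For (3), write $\hat H = \langle \hat h \rangle$ with $\hat h = wk$ where $w \in W$ and $k := \pi(\hat h) \in K$. Two degenerate cases fall immediately under the hypothesis $\Dist^{G}_{\hat H}(n) \nsimeq n$: if $k$ has finite order $d$ then $\hat h^d \in W$ generates an infinite cyclic subgroup that is undistorted in $G$ by (1), and since this has finite index in $\hat H$ the distortion of $\hat H$ is linear, a contradiction; and if $\langle k \rangle$ is undistorted in $K$ then $|\hat h^n|_G \ge |k^n|_K \simeq n$ yields the same contradiction. So $k$ has infinite order and $\langle k \rangle$ is distorted in $K$. The main step is to show $\hat h$ is $W$-conjugate to $k$. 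Iterating in the semidirect product gives $\hat h^n = w_n k^n$ with the cocycle $w_n := \sum_{i=0}^{n-1} k^i \cdot w \in W$, and counting $a^{\pm 1}$ letters yields $\|w_n\|_1 \le |\hat h^n|_G$. The hypothesis supplies $n_j \to \infty$ with $|\hat h^{n_j}|_G = o(n_j)$, so $\|w_{n_j}\|_1 = o(n_j)$. Restricting $w_n$ to a $\langle k \rangle$-orbit $\langle k \rangle x \subset K$, one sees it as the length-$n$ sliding sum of the finitely-supported sequence $i \mapsto w(k^i x)$; this sum has a constant plateau of length $\ge n - C$ at the orbit total $A_x := \sum_i w(k^i x)$, whence $\|w_n\|_1 \ge (n - C)\sum_O|A_O|$, summed over the finitely many orbits meeting $\supp w$. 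Dividing by $n_j$ forces every orbit sum $A_O$ to vanish. This vanishing is exactly the condition to build an antiderivative $u \in W$ orbit-by-orbit via cumulative sums of $w$ with $w = u - k \cdot u$, and then a direct semidirect-product computation gives $\hat h = u k u^{-1}$. So $\hat H$ is $G$-conjugate to $\langle k \rangle$, and setting $H = \langle k \rangle$ (case (2) of the theorem) gives $\Dist^{G}_{\hat H} \simeq \Dist^{G}_{H} \simeq \Dist^{K}_{H}$ by (2), as required.

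The hardest step is this cocycle-to-coboundary argument: one must verify carefully that the sliding-sum plateau bound turns sub-linear $\ell^1$-growth of $(w_n)$ into the vanishing of \emph{every} orbit sum $A_O$ (not merely a decay rate, and not merely on an $O$-averaged basis), after which the orbit-by-orbit construction of $u$ and the check that it has globally finite support (so lies in $W$) are routine consequences of the finiteness of $\supp w$.
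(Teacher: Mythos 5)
Your proof is correct. Parts (1) and (2) coincide with the paper's argument (finite support plus norm equivalence for (1); the retraction $G \onto K$ for (2)), and for (3) your skeleton also matches: after disposing of the finite-order case by passing to a power lying in $W$, the key invariant is the family of $\langle k\rangle$-orbit sums of the lamp function $w$, which is exactly the paper's function $F = \sum_{i} f^{k^i}$, and the dichotomy is whether $F$ vanishes. Where you diverge is in how each half is finished. For the lower bound when some orbit sum is nonzero, the paper argues that the lamplighter must visit each of the neighbourhoods $N_L(k^C), \ldots, N_L(k^{j-C})$; your $\ell^1$/plateau count of $a^{\pm 1}$ letters is an equivalent but more quantitative version of the same point. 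When all orbit sums vanish, the paper shows $\supp f_j$ is concentrated near $\{e, k^j\}$ and deduces $d_G(e,t^j) \le d_G(e,u^j) + C$ for $u = (\mathbf{0},k)$; you instead solve the coboundary equation $w = u - k\cdot u$ orbit-by-orbit (the vanishing of the orbit sums being precisely what makes the cumulative-sum antiderivative finitely supported) and exhibit $\hat{h} = uku^{-1}$ with $u \in W$. Your conjugacy argument is the cleaner finish: it delivers both inequalities of $\Dist^{G}_{\hat{H}} \simeq \Dist^{G}_{\langle k \rangle}$ at once (the paper explicitly records only one direction of the comparison), and it identifies $F$ as the cohomological obstruction to $\hat{h}$ being conjugate into $K$. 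The one step you flag as delicate---extracting the vanishing of every individual orbit sum from sublinear growth of $\|w_{n_j}\|_1$---is indeed fine, since the plateau bound gives $\|w_n\|_1 \ge (n - C)\sum_{O}|A_O|$ with the sum over the finitely many orbits meeting $\supp w$, and the $A_O$ are integers.
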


In the case of $G = \Z \wr F_2$ all the subgroups in this list are undistorted in $G$, because all finitely generated subgroups of $F_2$ are undistorted.  In the case of  $G = \Z \wr ( \Z \wr \Z )$ the list includes polynomially distorted subgroups on account of \cite{DavisThesis, DO}.
 
 It is tempting to try to use the results in this paper to address the question of Guba \& Sapir \cite{GubaSapir3} as to what functions may be distortion functions of finitely generated subgroups of Thompson's group $F$. However they do not speak to that because, while $( \Z \wr \Z ) \wr \Z$ is a subgroup of Thompson's group,  $\Z \wr ( \Z \wr \Z )$ and $\Z \wr F_2$ are not \cite[Theorem~1.2]{Bleak}.  

A companion to this paper is in preparation, proving that for $\Z \wr F_2$, Theorem~\ref{main thm} is best possible---that is, every finitely generated subgroup of $\Z \wr F_2$ is at most exponentially distorted \cite{BeauRi}.

  \section*{Acknowledgements}  I am most grateful to Aria Beaupr\'e, Jim Belk, Conan Gillis, and Chaitanya Tappu for helpful and stimulating conversations and I thank an anonymous referee for generously thoughtful feedback on the exposition, greatly enhancing this paper.   
 
 This paper is dedicated to the memory of Peter Neumann in tribute to his foundational work both as a scholar (e.g., pertinently, \cite{Neumann}) and as a teacher.

\section{Preliminaries}

For a group $G$   with finite generating set $S$, let $|g|_S$ denote the  length of a shortest word on $S^{\pm 1}$   representing $g$.   The word metric $d_S$ on $G$ is    $d_S(g, h) = \abs{g^{-1}h}_S$.  

Suppose  a subgroup $H \leq G$ is generated by  a finite  set  $T \subseteq G$.  The distortion function  $\Dist^{G}_{H} : \N \to \N$ for $H$ in $G$  compares  the word metric  $d_T$ on $H$ to the restriction of $d_S$ to $H$:   
$$\Dist^{G}_{H}(n) \ \coloneqq \  \max \set{  \  \abs{g}_T  \  \mid  \   g \in {H} \textup{ and }  \abs{g}_S  \leq n   \ }.$$
 
For functions $f, g : \N \to \N$ we write $f \preceq g$ when there exists $C>0$ such that $f(n) \leq C g( Cn +C) + Cn +C$ for all $n$.  We write $f \simeq g$ when $f \preceq g$ and $g \preceq f$.   
 
Two  finite  generating sets for a group yield biLipschitz word metrics, with the constants reflecting the minimal length words required to express the elements of one  generating set as words on the other.   So, up to $\simeq$, the growth rate of a distortion function does not depend on the finite generating sets.

Let $W =   \bigoplus_{K}  L$, the  direct sum of a  $K$-indexed family of copies of  $L$. The \emph{(restricted) wreath product} $G = L \wr K$ is the semi-direct product  $W \rtimes K$  with $K$ acting to shift the indexing.  More precisely, given  a function $f : K \to L$ that is  finitely supported (meaning $f(k)=e$ for  all but finitely many $k \in K$) and given $k \in K$, define $f^k : K \to L$ by $f^k(v) =  f(vk^{-1})$.   Then $L \wr K$ is the set of such pairs $(f,k)$ with  multiplication  $$(f,k)(\hat{f},\hat{k}) \  = \  (  f  + \hat{f}^k, k   \hat{k} ).$$

A \emph{lamplighter  description}  helps us navigate $G$.  Suppose $\set{a_1, \ldots, a_m}$ generates $L$ and $\set{b_1, \ldots, b_l}$ generates $K$.  Viewing $L$ and $K$ as subgroups of $G$, with $L$ being the $e$-summand of $W$, the set   $S = \set{a_1, \ldots, a_m, b_1, \ldots, b_l}$  generates  $G$.  Then $W$ is  the normal closure of $a_1, \ldots, a_m$ in $G$, or equivalently the kernel of the map $\Phi: G \onto K$ that kills $a_1, \ldots, a_m$.  
Imagine $K$ as a city.   At each street corner (that is, each element of $K$) there is a 
lamp whose setting is expressed as an element of $L$.  An   $(f,z) \in L \wr K$ records   settings $f(k) \in L$ of the lamps $k \in K$  and a  location $z \in K$ for the lamplighter.    A word $w$ on $S^{\pm 1}$ representing $(f,z)$ describes how at dusk a lamplighter walks the city streets  adjusting the  lamps to achieve $(f,z)$.  He starts at  $e \in K$ with all lights off (that is, set to $e \in L$) and, reading $w$ from left to right, moves in $K$ according to the $b_1^{\pm 1}, \ldots, b_l^{\pm 1}$ until finally arrives at $z$.  En route, he adjusts the setting of each lamp where he stands according to the $a_1^{\pm 1}, \ldots, a_m^{\pm 1}$.

Our conventions are that $[x,a] = x^{-1} a^{-1} xa$ and $x^a = a^{-1} x a$.

  \section{The exponential lower bound on distortion} \label{exp lower bound}

  \begin{prop} \label{pebbles prop}
  Suppose $K = \langle  x, y \mid R  \rangle$ is a 2-generator group such that mapping $x$ and $y$ to $1$ defines an epimorphism  $\theta: K \to \Z$ (a `height function').    
  
  Suppose that for $n \geq 1$, there is a set $P_n$ of  elements of $K$ such that:
  \begin{itemize}
  \item[\emph{(i)}] $x^{n-1} \in P_n$ but $x^ny^{-n} \notin P_n$.
  \item[\emph{(ii)}] If $p \in P_n$, then $p x^{-1}, p y^{-1} \in P_n$.   
  \item[\emph{(iii)}] If $k \in K \ssm P_n$ and either $k x^{-1}$ or $k y^{-1}$ is in $P_n$, then $\theta(k) =n$.
  \end{itemize}

  Let $G = \Z \wr K$,   generated by $a, x, y$ where $\Z = \langle a \rangle$.  Let $$H \ = \ \langle x, \ y, \  \sigma, \  \tau \rangle \ \leq G$$ where $\sigma = [x,a]a$ and $\tau = [y,a]a$.  Then  $\Dist^{G}_{H}(n) \succeq 2^n$.   
  \end{prop}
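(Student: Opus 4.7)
The plan is to construct a real-valued linear functional $\phi_n$ on the lamp module $W = \bigoplus_K \Z$ that changes by a bounded amount per $T$-letter while taking exponentially large values on an element of $H$ that has only linear $S$-length. Motivated by the fact that $\theta(v) - \theta(vx^{-1}) = 1$ yields the telescoping identity $2 \cdot 2^{\,n-\theta(v)} = 2^{\,n-\theta(vx^{-1})}$, the right choice is
\[
\phi_n(f) \;=\; \sum_{v \in P_n} f(v) \cdot 2^{\,n-\theta(v)}.
\]

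The first step is to bound $|\phi_n(g)| \leq 2|g|_T$ for every $g \in H$. Reading an $H$-word $t_1 \cdots t_m$ left to right and tracking the walker's position $u$ before each letter, the letters $x^{\pm 1}, y^{\pm 1}$ leave $\phi_n$ unchanged, while a letter $\sigma^\epsilon$ adjusts the lamp function by $\epsilon(2\delta_u - \delta_{u x^{-1}})$ and thereby changes $\phi_n$ by $\epsilon(2 \cdot 2^{\,n-\theta(u)}[u \in P_n] - 2^{\,n-\theta(ux^{-1})}[ux^{-1} \in P_n])$. Properties (ii) and (iii) split this into three cases: (a) if $u \in P_n$, then (ii) gives $ux^{-1} \in P_n$ with $\theta(ux^{-1}) = \theta(u) - 1$, and the two weighted terms cancel exactly, so $\Delta\phi_n = 0$; (b1) if $u \notin P_n$ but $ux^{-1} \in P_n$, then (iii) forces $\theta(u) = n$ and $\Delta\phi_n = -2\epsilon$; (b2) if both are outside $P_n$, then $\Delta\phi_n = 0$. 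The analysis for $\tau^\epsilon$ is identical. Hence $|\Delta\phi_n| \leq 2$ per letter, so $|g|_T \geq \tfrac{1}{2}|\phi_n(g)|$.

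The second step is to exhibit a short-$S$ element of $H$ on which $\phi_n$ is exponentially large. I would take
\[
g_n \;=\; a \cdot (x^n y^{-n}) \cdot a^{-1} \cdot (y^n x^{-n}),
\]
whose walker ends at $e$ and whose lamp function is $\delta_e - \delta_{x^n y^{-n}}$, so $|g_n|_S \leq 4n+2$. By (i) and (ii), $e \in P_n$ (walk down from $x^{n-1}$ by $x^{-1}$'s) with $\theta(e) = 0$, and by (i), $x^n y^{-n} \notin P_n$; thus $\phi_n(g_n) = 2^n$. Combined with the first step, $|g_n|_T \geq 2^{n-1}$, so $\Dist^G_H(4n+2) \geq 2^{n-1}$ and $\Dist^G_H \succeq 2^n$.

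The main obstacle is to verify $g_n \in H$, which is not visible from the generating set. I would identify $H \cap W$ with the left $\Z[K]$-submodule $M$ of $\Z[K]$ generated by $2 - x^{-1}$ and $2 - y^{-1}$ (the $\Z[K]$-avatars of $\sigma$ and $\tau$), then show $1 - x^n y^{-n} \in M$. Because $x(2-x^{-1}) = 2x-1$ lies in $M$, one has $1 \equiv 2x \pmod M$, and iterating by left-multiplication yields $1 \equiv 2^n x^n \pmod M$. Similarly, $\tau \in M$ gives $y^{-1} \equiv 2$, and iterating yields $y^{-n} \equiv 2^n \pmod M$. Since left-multiplication by $x^n$ preserves $M$, it follows that $x^n y^{-n} \equiv x^n \cdot 2^n = 2^n x^n \equiv 1 \pmod M$, giving $1 - x^n y^{-n} \in M$. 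A related point worth highlighting is that the exponential weight $2^{\,n-\theta(v)}$ is essentially forced: the telescoping cancellation in case (a) must be exact to keep $|\Delta\phi_n|$ bounded per $T$-letter, while the value $2^n$ at $v = e$ is precisely what delivers the exponential gap.
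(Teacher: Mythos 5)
Your proposal is correct, and at its core it is the same argument as the paper's: same height function, same role for $P_n$, and essentially the same test element (the paper uses $a^{-1}x^ny^{-n}a$, with lamp function $-\delta_e+\delta_{x^ny^{-n}}$, rather than your $g_n$; the difference is immaterial). Where you diverge is in packaging, and both divergences are worth noting. For the lower bound, the paper defines the height-slices $p_i(f)=\sum_{g\in P_n,\,\theta(g)=i}f(g)$ and argues by a cascading count: to return $p_0$ from $-1$ to $0$ one must drive $p_1$ to $-2$, then $p_2$ to $-4$, and so on. Your functional $\phi_n(f)=\sum_{v\in P_n}f(v)2^{n-\theta(v)}$ is exactly the weighted sum $\sum_i 2^{n-i}p_i(f)$, so you have linearized the paper's sequence into a single potential function; the payoff is that the exponential lower bound becomes a one-line Lipschitz estimate ($|\Delta\phi_n|\le 2$ per $T$-letter, via (ii) for the exact telescoping cancellation and (iii) for the boundary case at height $n$), which is tighter and arguably more airtight than the paper's informal doubling narrative. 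For membership in $H$, the paper writes down explicit words $\lambda_n=x\sigma\,x\sigma^2\cdots x\sigma^{2^{n-1}}$ and $\mu_n$ and checks $a\lambda_n=x^na^{2^n}$, $a\mu_n=y^na^{2^n}$; your group-ring computation ($1\equiv 2^nx^n$ and $y^{-n}\equiv 2^n$ modulo the submodule generated by $2-x^{-1}$ and $2-y^{-1}$) is the telescoped shadow of exactly those words, resting on the standard fact that $H=(H\cap W)\rtimes K$ with $H\cap W$ the $K$-submodule generated by the images of $\sigma$ and $\tau$ (true here since $x,y\in H$ generate $K$). The paper's explicit words additionally certify the matching upper bound on $|a^{-1}x^ny^{-n}a|_T$, which your module argument does not, but that is not needed for the lower bound you were asked to prove.
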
 

For example,  when $K = F_2 = F(x,y)$, because the Cayley graph is a tree, the  proposition applies with $P_n$ the set of reduced words whose prefixes $\pi$ all satisfy $\theta(\pi) < n$.

We view \emph{(ii)} as saying that when moving in the Cayley graph of $K$, it is not possible to enter $P_n$  from below, and   \emph{(iii)} as saying that $P_n$ can only be entered from above by moving from a height-$n$ element outside $P_n$ to a height-$(n-1)$ element in $P_n$.       Together,  \emph{(i)} and \emph{(ii)} imply that  $x^i \in P_n$  if and only if $i <n$.   

Here is the idea behind this proposition in terms of the lamplighter description.  

Suppose the lights at the elements $e$ and $x^n y^{-n}$ of $K$ are set to $1$ and $-1$, respectively, and all other lights are off (set to $0$).  How can a lamplighter turn all the lights off using $x$, $y$, $\sigma$, and $\tau$? He has four types of moves at his disposal: he can navigate the Cayley graph of $K$ (by using $x$ and $y$); because $\sigma = [x,a]a = x^{-1} a^{-1} x a^2$, he can decrement by $1$ the lamp one step away in the $x^{-1}$-direction at the expense of incrementing the lamp where he stands by $2$; and likewise in the $y^{-1}$-direction using $\tau$.   The answer is he sets the lamp at $e$ to $0$ at the expense of setting the lamp  at $x$ to $2$.  Then he sets the lamp  at $x$ to $0$     at the expense of setting the lamp  at $x^2$ to $4$.  And so on, until the lamp at $x^n$ is set to $2^n$.  He then sets that to $0$ and, proceeding in the $y^{-1}$ direction, sets the lamp at $x^ny^{-1}$ to $2^{n-1}$.  Continuing likewise in the $y^{-1}$-direction he sets the lamp at $x^ny^{-(n-1)}$ to $2$.  Finally, he adjusts the lamp at $x^ny^{-(n-1)}$  to zero at the expense of changing the lamp at $x^ny^{-n}$, but as that was initially set to $-1$, this results in all lights being off, as required.  

The above method takes at least $2^n$ moves, but could  it have been accomplished with fewer?  The hypothesis involving $P_n$,  $x^n y^{-n}$, and the epimorphism  $K \to \Z$  ensures it cannot.  Any path from    $e$ to $x^n y^{-n}$ in the Cayley graph must rise to height $n$ to escape $P_n$ and the settings of the lights must be incrementally adjusted on the way up so that the number of $\sigma$- and $\tau$-moves grows exponentially with the height.          

Here is a proof.

 \begin{proof}[Proof of Proposition~\ref{pebbles prop}]  Fix $n \geq 1$. 
 First we   show that   $a^{-1} x^n y^{-n} a   \in H$. Define      
 \begin{eqnarray*} 
  \lambda_n  & = &       x  \sigma  \   x \sigma^2 \     \cdots \  x  \sigma^{2^{n-1}} \\ 
  \mu_n  & = &       y  \tau  \   y \tau^2 \    \cdots \  y  \tau^{2^{n-1}},  
      \end{eqnarray*} 
which   both represent elements of $H$.  In $G$, the elements $a$ and $x^{-1}a x$ commute,   so for all $i$,  $$a^i \ x \sigma^i  \ = \ a^i \ x \  (x^{-1} a^{-1} x a^2)^i \ = \   a^i \ x \   x^{-1} a^{-i} x  \ a^{2i}   \ = \ x \  a^{2i},$$ and therefore $a \lambda_n = x^n a^{2^{n}}$.  Likewise, $a \mu_n = y^n a^{2^{n}}$ in $G$. So $a^{-1} x^n y^{-n} a$ equals $\lambda_n \mu_n^{-1}$ in $G$ and represents an element of $H$.

 The length of $\lambda_n \mu_n^{-1}$ as a word on $x,   y,  \sigma, \tau$  is $2n + 2^{n+1} -2$.      Next we will argue that the length of \emph{any} word $w$ on $x,   y,  \sigma, \tau$  that represents  $a^{-1} x^n y^{-n} a$ in $G$ is at least $2^n -1$.  The length of  $a^{-1} x^{n} y^{-n} a$   as a word on $a,x,y$ is  $2n+2$.  So we will then have that 
$\Dist^{G}_{H}(2n+2) \geq 2^n -1$ and the  proposition will   follow.

Given a   finitely supported function   $h : K \to \Z$ and an integer  $i  < n$, define  
 $$p_i(h) \ = \  \sum_{\substack{g \in P_n \\ \theta(g) = i } } h(g).$$

Express $a^{-1} x^n y^{-n} a$ in the form $(f, x^n y^{-n})$ where $f$ is $-1$ at $e$, is $1$ at $x^n y^{-n}$, and is $0$ elsewhere.     Then the sequence 
\begin{equation*}
\mathcal{P}_n(f) \ = \  \left( \rule{0mm}{4mm} \ldots, \ p_{-1}(f), \ p_0(f), \ p_{1}(f), \  \ldots, \  p_{n-2}(f),  \ p_{n-1}(f)\right)
\end{equation*}
 is all zeroes apart from   $p_0(f)=-1$, since $e$ is in $P_n$ but $x^n y^{-n}$ is not.  
 
Now consider the effect on $\mathcal{P}_n(f)$ of changing $f$ via the action of $\sigma$ or $\tau$    
when the lamplighter is located at some  $k \in K$.  Let $i = \theta(k)$.   If $k \in P_n$, then (by hypothesis) $k x^{-1}$ and $k y^{-1}$ are in $P_n$ and so  $p_{i-1}(f)$ is lowered by $1$ and $p_{i}(f)$ is increased by $2$.  And if $k \notin P_n$, then  $k x^{-1}$ and $k y^{-1}$  can only be in  $P_n$ if $i=n$ (again, by hypothesis) and if so, $p_{n-1}(f)$ (only) decreases by $1$.  The effects of the actions of  $\sigma^{-1}$ and $\tau^{-1}$ are the same, but instead of lowering lamp settings by $1$ they increase them by $1$, and instead of increasing by $2$ they decrease by $2$.

We can read off   $w^{-1}$ a sequence of applications of $\sigma^{\pm 1}$ and $\tau^{\pm 1}$ (and  lamplighter movements around  $K$) that   convert  $a^{-1} x^n y^{-n} a$  to $e$ and so  convert $\mathcal{P}_n(f)$ to the sequence of all zeroes.  We will argue that this process must display a doubling effect that implies a  lower bound of $2^n-1$ on the length of $w$.  

For all integers $i < n$, let $\alpha^{+}_i$ (respectively, $\alpha^{-}_i$)  be the number of prefixes $k$ of $w^{-1}$ that have $\theta(k)=i$, have $k \in P_n$, and have final letter $\sigma$ or $\tau$ (respectively, $\sigma^{-1}$ or $\tau^{-1}$). Let $\alpha^{+}_n$ (respectively, $\alpha^{-}_n$)  be the number of prefixes $k$ of $w^{-1}$ that have $\theta(k)=n$, have $k \in P_n$, and either have final letter $\sigma$ (respectively, $\sigma^{-1}$) and $k x^{-1} \in P_n$,   or have final letter $\tau$ (respectively, $\tau^{-1}$) and $k  y^{-1} \in P_n$.  For all $i \leq n$, let $\alpha_i = \alpha^{+}_i - \alpha^{-}_i$.

 The net effect of $w^{-1}$ on $\mathcal{P}_n(f)$ is to  add $2\alpha_{i} - \alpha_{i+1}$ to the entry $p_i(f)$ for all $i < n$, because the order in which the applications of the relevant $\sigma^{\pm 1}$ and $\tau^{\pm 1}$ occur is immaterial. So, because $\mathcal{P}_n(f)$ is converted to the sequence of all zeros,  we have 
\begin{equation}
 	      2\alpha_{i} - \alpha_{i+1}     \ = \begin{cases} 0  & \text{ for }  i < n \text{ with } i \neq 0,  \label{alphas} \\ 
 1 & \text{ for }   i = 0.    
 \end{cases}  
\end{equation}

Now, $\alpha_i = 0$ for all $i$ sufficiently large and negative.  So  we deduce from \eqref{alphas} that $\alpha_i = 0$ for all $i \leq 0$, and then that $\alpha_1 = -1$, then $\alpha_2 = -2$, and so on until  $\alpha_{n} = -2^{n-1}$.    The sum $|\alpha_1| + \ldots + |\alpha_{n-1}| = 2^n-1$ is a lower bound on the number of letters $\sigma^{\pm 1}$ and $\tau^{\pm 1}$ in $w^{-1}$ and therefore on the length of $w$.
\end{proof}

  (In fact, when $K$ is $F_2$ or $\Z \wr \Z$, as per the following corollary, the roles of $x$ and $y$ are interchangeable and the above proof  shows $\lambda_n \mu_n^{-1}$ is a geodesic word.)

 \begin{cor} \label{cor specific examples}
 The subgroups of  $\Z \wr ( \Z \wr \Z )$ and  $\Z \wr F_2$ of Theorem~\ref{main thm}    are both at least exponentially distorted.
 \end{cor}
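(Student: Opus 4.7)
The plan is to apply Proposition~\ref{pebbles prop} to each of the two groups $K$. The height function $\theta:K\to\Z$ sends both $x$ and $y$ to $1$: concretely this is the abelianization for $K=F_2$, and for $K=\Z\wr\Z=\langle s\rangle\wr\langle t\rangle$ it is the projection onto the acting factor (killing $s$ and sending $t\mapsto 1$, so that $x=ts\mapsto 1$ and $y=t\mapsto 1$). What remains is, for each $n\geq 1$, to exhibit a set $P_n\subseteq K$ satisfying \emph{(i)}, \emph{(ii)}, \emph{(iii)}.

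For $K=F_2$ every element has a unique reduced word $s_1\cdots s_m$ over $\{x^{\pm 1},y^{\pm 1}\}$. I define $P_n$ to be the set of those elements whose reduced word has every prefix $s_1\cdots s_j$ of height at most $n-1$; pictorially, $P_n$ is the subtree of the Cayley graph reachable from $e$ by geodesic paths that never touch height $n$. Condition \emph{(i)} is immediate from the reduced expressions for $x^{n-1}$ and for $x^ny^{-n}$ (which has $x^n$ as a prefix). For \emph{(ii)}, I separate the cases of whether $\cdot x^{-1}$ (resp.\ $\cdot y^{-1}$) shortens the reduced word of $p$---in which case the new prefixes form a sub-list of the old ones---or lengthens it, in which case the single new prefix has height $\theta(p)-1<n$. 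Condition \emph{(iii)}, the most delicate, follows from the same dichotomy: if $k\notin P_n$ and $kx^{-1}\in P_n$, then the reduced word of $k$ must end in $x$ (else no cancellation occurs and $k\in P_n$), and the only prefix of $k$ failing the height bound is $k$ itself, forcing $\theta(k)=\theta(kx^{-1})+1=n$.

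For $K=\Z\wr\Z$ I use the lamplighter description, writing $g=(f,j)$ with $f:\Z\to\Z$ finitely supported and $j=\theta(g)$. A direct computation from $s=(\delta_0,0)$ and $t=(0,1)$ yields
\[(f,j)\cdot x^{-1}=(f-\delta_j,\,j-1),\qquad (f,j)\cdot y^{-1}=(f,\,j-1),\]
so $y^{-1}$ merely lowers the lamplighter while $x^{-1}$ additionally decrements the lamp at the pre-move position. I then set
\[P_n\ =\ \bigl\{\,(f,j)\,:\,j\leq n-1\ \text{ and }\ \supp(f)\subseteq(-\infty,n-1]\,\bigr\}.\]
Condition \emph{(i)} is a direct check, since $x^{n-1}=(\delta_1+\cdots+\delta_{n-1},n-1)\in P_n$ and $x^ny^{-n}=(\delta_1+\cdots+\delta_n,0)\notin P_n$ (its support contains $n$). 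For \emph{(ii)}, both arrows lower $j$ by $1$ and can only alter the lamp at the pre-move position $j\leq n-1$, which stays inside $(-\infty,n-1]$. For \emph{(iii)} a short case analysis suffices: if $(f,j-1)\in P_n$ but $(f,j)\notin P_n$ then the support constraint is met so only the height constraint fails, giving $j=n$; and if $(f-\delta_j,j-1)\in P_n$ but $(f,j)\notin P_n$, then $f$ agrees with $\delta_j$ on $[n,\infty)$, forcing $j\geq n$, which combined with $j-1\leq n-1$ yields $j=n$.

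With both verifications in hand, Proposition~\ref{pebbles prop} immediately delivers $\Dist^G_H(n)\succeq 2^n$ in each case, establishing the corollary. The principal obstacle is the design of $P_n$: the condition must be stable under $\cdot x^{-1},\cdot y^{-1}$ yet touch the complement only at height $n$. For $\Z\wr\Z$ this requires restricting the support of $f$ as well as the lamplighter position, in order to block spurious entries into $P_n$ via the $x^{-1}$-arrow from heights strictly greater than $n$.
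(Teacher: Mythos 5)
Your proposal is correct and matches the paper's own proof: it uses the same height function and the same sets $P_n$ (the prefix-bounded subtree for $F_2$, and the lamplighter states with position and lamp support bounded by $n-1$ for $\Z\wr\Z$), then invokes Proposition~\ref{pebbles prop}. Your verification of conditions \emph{(i)}--\emph{(iii)} is simply more detailed than the paper's terse check.
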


 \begin{proof}
 
How Proposition~\ref{pebbles prop} applies to $\Z \wr F_2$ was explained after its statement.

For  $\Z \wr ( \Z \wr \Z )$, consider  $K = \Z \wr \Z = \langle s, t \mid [s, s^{t^i}]=1  \ \forall i  \rangle$ and its generating set $x=ts$ and $y=t$.  Mapping  $x, y \mapsto 1$ defines an epimorphism $\theta: K \onto \Z$.   

In the lamplighter model for $\Z \wr \Z$, the integer $\theta(k)$ is the position of the lamplighter. Take $P_n$ to be the set of  all $k = (f, i) \in \Z \wr \Z$ such that  $\theta(k) = i < n$ and the $f$ is supported on $\{.... , n-2, n-1\}$.  Then $k x^{-1}$ and $k y^{-1}$ are in $P_n$ for all $k \in P_n$ since $x^{-1}$ decrements the light at the lamplighter's location and  then moves one step in the negative direction, and $y^{-1}$ only moves one step in the negative direction.  The elements $x^i$ for $0 \leq i \leq n-1$ are in $P_n$ because they set the lights at positions $1, 2, \ldots, i$  to $1$ and in all other positions to $0$  and they locate the lamplighter at position $i$.  However, $x^n y^{-n} = (ts)^n t^{-n}$  has the lights at positions $1, 2, \ldots, n$  set to $1$ (and at all others positions set to $0$), so is not in $P_n$.  And if $k \in K$ and either $k x^{-1}$ or $k y^{-1}$ is in $P_n$, then $\theta(k) =n$.
 \end{proof}

 The same proof works for  $\Z \wr ( C \wr \Z )$ for any finite cyclic group $C \neq \{ 1 \}$.  

An example where Proposition~\ref{pebbles prop} does not apply may be illuminating. The hypotheses on $P_n$ imply that any path from  $e$ to $x^n y^{-n}$ in the Cayley graph of $K$ must climb to height $n$ en route.   If $K = \Z^2 = \langle x,y \mid [x,y] \rangle$, then this need not happen, because  $x^n y^{-n} = (x y^{-1})^n$.  Indeed, in $\Z \wr \Z^2$ we find that $a^{-1} x^n y^{-n} a =  \left( a^{-1} xy^{-1} a \right)^n = ((x \sigma)(y \tau)^{-1})^n$, a word of length $4n$ on the generators of $H$.   
 
  \section{The exponential upper bound on distortion} \label{exp upper bound}
   
   Let $G = \Z \wr K$ where $K$ is $F_2$ or $\Z \wr \Z$ as per Theorem~\ref{main thm}.  
 Let $\theta: K \to \Z$ be the epimorphism mapping $x$ and $y$ to $1$.

\begin{lemma} 
	The subgroup  $H$ of $G$ of Theorem~\ref{main thm} is the set of all $g = (f,k) \in G$ such that 
	\begin{equation} \label{H lamp condition} \sum_{i \in \Z} 2^{-i} \sum_{\substack{v \in K, \  \theta(v) = i}} f(v) \ = \ 0. \end{equation}    
\end{lemma}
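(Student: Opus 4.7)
The plan is to show that $H$ coincides with the set $N := \{(f,k) \in G : \Phi(f) = 0\}$, where $\Phi(f) = \sum_{i \in \Z} 2^{-i} \sum_{\theta(v) = i} f(v)$ is the left-hand side of \eqref{H lamp condition}. The key preliminary identity is $\Phi(g^k) = 2^{-\theta(k)} \Phi(g)$, a change-of-variables calculation using that the $K$-action on $W$ shifts supports by $k$ and that $\theta \colon K \to \Z$ is a homomorphism. Combined with the $\Z$-linearity of $\Phi$, this yields $\Phi(f_1 + f_2^{k_1}) = \Phi(f_1) + 2^{-\theta(k_1)} \Phi(f_2)$, so $N$ is closed under the wreath-product multiplication and under inversion; hence $N$ is a subgroup of $G$.

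For $H \subseteq N$ it then suffices to check the four generators. Both $x$ and $y$ have trivial lamp part; $\sigma$ has lamp part $2 e_e - e_{x^{-1}}$, giving $\Phi = 2\cdot 2^0 - 2^{-(-1)} = 0$, and the computation for $\tau$ is analogous. For the reverse inclusion $N \subseteq H$, decompose any $(f,k) \in N$ as $(f, e) \cdot (0, k)$. Since $x, y \in H$ generate a copy of $K$ inside $H$, $(0, k) \in H$, so the question reduces to showing $f \in M := W \cap H$ whenever $\Phi(f) = 0$. Because $W$ is abelian, $M$ is the $K$-conjugation closure in $W$ of the lamp parts of $\sigma$ and $\tau$; identifying $W$ with $\Z[K]$ and reading conjugation as left multiplication, $M$ is therefore the $\Z$-span of $\{2 e_k - e_{kx^{-1}}, \ 2 e_k - e_{ky^{-1}} : k \in K\}$.

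The main obstacle is the containment $\ker \Phi \cap W \subseteq M$. Differencing pairs of generators instantly gives $e_{kx^{-1}} - e_{ky^{-1}} \in M$ for each $k$, and combinations yield $2(e_{k_1} - e_{k_2}) \in M$ whenever $\theta(k_1) = \theta(k_2)$; the delicate point is lifting this to $e_{k_1} - e_{k_2} \in M$, i.e.\ excluding 2-torsion in $W/M$. I would prove by induction on $|k|$ that for each $k \in K$ there is an element of $M$ witnessing $e_k \equiv e_{x^{\theta(k)}} \pmod{M}$: peel off a trailing letter of the reduced word for $k$ and apply the corresponding generator $2 e_m - e_{mx^{-1}}$ or $2 e_m - e_{my^{-1}}$. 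For $K = F_2$ the inductive step is controlled by the Cayley-tree structure of $F_2$ (the unique geodesic from $k$ to $x^{\theta(k)}$ giving an unambiguous reduction), and for $K = \Z \wr \Z$ by the lamplighter normal form. Once this is in hand, writing any $f \in \ker \Phi$ as a finite sum $\sum c_k e_k$ and reducing modulo $M$ collapses $f$ to a $\Z$-combination of $\{e_{x^n} : n \in \Z\}$, on which $\Phi$ is injective, forcing $f \equiv 0 \pmod{M}$ and completing the proof.
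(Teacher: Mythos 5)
You have set up the right framework (reducing the problem to whether $\ker\Phi\cap W$ equals the $\Z$-span $M$ of the $K$-translates of the lamp parts of $\sigma$ and $\tau$), and you have correctly located the delicate point---but that point is a genuine gap, and it cannot be closed, because the containment $\ker\Phi\cap W\subseteq M$ is false. Take $K=F_2$ and write $\delta_v\in W$ for the function taking the value $1$ at $v$ and $0$ elsewhere, so that $M$ is spanned by the elements $2\delta_k-\delta_{k\ell^{-1}}$ for $k\in K$ and $\ell\in\{x,y\}$. Define $\phi\colon W\to\Z[1/2]/\Z$ by $\phi(\delta_v)=2^{-\theta(v)}\alpha(v)+\Z$, where $\alpha(v)=1$ if the reduced word of $v$ begins with the letter $x$ and $\alpha(v)=0$ otherwise. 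Then $\phi$ kills every generator of $M$: its value on $2\delta_k-\delta_{k\ell^{-1}}$ is $2^{1-\theta(k)}\left(\alpha(k)-\alpha(k\ell^{-1})\right)+\Z$, which vanishes because for $\theta(k)\le 1$ the coefficient $2^{1-\theta(k)}$ is an integer, while for $\theta(k)\ge 2$ the reduced words $k$ and $k\ell^{-1}$ have the same first letter, so $\alpha(k)=\alpha(k\ell^{-1})$. Hence $\phi(M)=0$. However $\phi(\delta_x-\delta_y)=\tfrac12\neq 0$, while $\Phi(\delta_x-\delta_y)=\tfrac12-\tfrac12=0$. So $\delta_x-\delta_y$ lies in $\ker\Phi\cap W$ but not in $M$; equivalently the element $xax^{-1}\,ya^{-1}y^{-1}$ of $G$ satisfies \eqref{H lamp condition} but is not in $H$. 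This refutes your intermediate claims directly: $2(\delta_{k_1}-\delta_{k_2})\in M$ already fails for $k_1=xy$, $k_2=yx$ (apply $\phi$), and the proposed induction giving $\delta_k\equiv\delta_{x^{\theta(k)}}\pmod{M}$ fails at $k=y$. The $2$-torsion you hoped to exclude from $W/M$ is really present: $W/M$ surjects onto the Pr\"ufer group $\Z[1/2]/\Z$ in a way that does not factor through $\Phi$.

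For comparison, the paper's own proof is not algebraic: it runs a lamplighter algorithm, pushing each lamp value along a path to the origin and invoking \eqref{H lamp condition} only at the very end. That argument silently assumes a lamp value can be transported one step \emph{down} in height, but the only move available there (stand at $v$ and apply $\sigma^{j}$, adding $2j$ at $v$ and $-j$ at $vx^{-1}$) clears the lamp at $v$ only when its value is \emph{even}: transporting upward doubles, transporting downward halves. So the paper's proof skips exactly the parity issue that your module-theoretic reformulation makes visible, and the homomorphism $\phi$ shows the obstruction is real rather than an artefact of either method. Your approach is therefore the more illuminating one---pushed to completion it identifies $H\cap W$ as the proper subgroup $M$ of $\ker\Phi\cap W$---but as a proof of the stated lemma it cannot succeed, and the intrinsic description of $H$ (and with it the upper-bound argument that relies on it) needs to be corrected rather than merely re-derived.
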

   
\begin{proof}
The four generators $x$, $y$, $\sigma = [x,a]a$ and  $\tau = [y,a]a$ of $H$ satisfy \eqref{H lamp condition}. And any   $g =   (f,k) \in G$ satisfying \eqref{H lamp condition} can be expressed as a word $u$ on $x^{\pm 1}$, $y^{\pm 1}$, $\sigma^{\pm 1}$, $\tau^{\pm 1}$ since it can can be transformed to the identity element as follows.  

Let $n = d_G(e, g)$, the length of the shortest word on $a^{\pm 1}, x^{\pm 1}, y^{\pm 1}$ representing $g$. The cardinality of $\supp f$ is at most $n$.   
Every   $h \in \supp f$ can be joined to $e$ in the Cayley graph of $K$  (with respect to $x$ and $y$) by a   path of length at most $n$.  The lamp setting $f(h)$ at $h$ has  absolute value at most $n$. By moving along this path (using $x^{\pm 1}$ and $y^{\pm 1}$) and successively adjusting  lamps along it (using $\sigma^{\pm 1}$ and $\tau^{\pm 1}$), the lamplighter can reset the lamp at $h$ to $0$ at the expense of changing the lamp at $e$ by at most $2^n$ while, in the process, the lamp settings always satisfy \eqref{H lamp condition}.  Once all the other lights have been extinguished the light at $e$ is also at $0$ on account of \eqref{H lamp condition}. \end{proof}

The above argument is  quantified in such a way that a couple of further observations complete the exponential upper bound proof for Theorem~\ref{main thm}.  
The absolute values of the settings of the lamps along the at most $n$ paths will grow to at most $n + n 2^n$ in the course of the transformation of the lamp settings.  The number of times $x^{\pm 1}$ and $y^{\pm 1}$ are used (for the movement) is at most $n^2$.   So $u$ has length at most a constant times $2^n$, establishing the exponential upper bound on $\Dist^{G}_{H}$.

 \section{Elementary subgroups of $\Z \wr K$}  \label{other subgroups}

Here we prove Theorem~\ref{other subgroups thm}. We have $G = \Z \wr K$, where $K$ is a finitely generated group. So $G = W \rtimes K$, where $W =   \bigoplus_{K}  \Z$.

Claim \eqref{W case} is that if  $H$ is a  finitely generated subgroup of $W$, then $H$ is undistorted in $G$.  Well, because  $H$ is finitely generated, it is a subgroup of the product of finitely many of the summands in $W =   \bigoplus_{K}  \Z$ and there exists $C \geq 1$ such that for all $g = (f,e) \in H$, both 
$d_G(e,g)$ and $d_H(e,g)$ (word metrics with respect to the generating sets for $G$ or for $H$, respectively) are between  $\frac{1}{C}\max_{i \in K} |f(i)|$ and $C\max_{i \in K} |f(i)|$.  So $H$ is undistorted in $G$.

Claim \eqref{K case} is that if $H$ is a finitely generated subgroup of $K$, then $\Dist^{G}_{H} \simeq \Dist^{K}_{H}$.  This is  straight-forward on account of the map $G \onto K$ killing $W$.  

Finally, claim \eqref{Z case} is that if $\hat{H} = \langle t \rangle$ is a $\Z$-subgroup  of $G$, then either $\hat{H}$ is undistorted in $G$ or there exists a subgroup $H  \cong \Z$  of $W$ or $K$  such that  $\Dist^{G}_{H} \simeq \Dist^{G}_{\hat{H}}$. 

Well, $t= (f,k)$ for some $f \in W$ and some $k \in K$.   If $k$ has finite order $r$, then $t^r = (f', e)$ for some $f' \in W$, and $H = \langle t^r \rangle$ is a subgroup of $W$ such that $\Dist^{G}_{H} \simeq \Dist^{G}_{\hat{H}}$.

Suppose, on the other hand, $k$ has infinite order.   Roughly speaking, we will show that for all $j$, either $t^j$ illuminates lights close to most of $e, k, \ldots, k^j$ and $\hat{H}$ is therefore undistorted in $G$, or it only illuminates lights close to $e$ and $k^j$, and   $\hat{H}$ is therefore distorted in $G$ similarly to $\langle (\mathbf{0}, k) \rangle$.       

Let $F: K \to \Z$ be the map $\sum_{i \in \Z} f^{k^i}$.  In terms of the lamplighter model, $F$ tells us the settings of the lights after the lamplighter acts per $f$ at $k^i$ for every  $i \in \Z$. As $k$ has  infinite order, $F$ is well-defined---for any $h \in K$,  $f^{k^i}(h) =0$ for all but finitely many $i$---but $F$ need not be finitely supported and so may not represent an element of $W$.  Indeed, $F$ is invariant under the action of $k$, so either $F = \mathbf{0}$ (the zero-map), or $F$ has infinite support.  

For $j \geq 1$, let $f_j = \sum_{i=0}^{j-1} f^{k^i}$, so that $t^j = ( f_j , k^j)$.

Let $L>0$ be sufficiently large that $\supp f \subset N_L(e)$---that is, the radius-$L$ neighbourhood  of $e$ in the Cayley graph of $K$ contains the support of $f$.  Then $\supp F \subseteq N_L( \langle k \rangle)$  and  $\supp f_j \subseteq N_L( \{  k^0, k^1, \ldots, k^j \})$ for all $j \geq 1$.

Because $k$ has infinite order, for all $R>0$, there exists $i$ such that $k^i, k^{i+1}, \ldots$ are a distance greater than $R$ from $e$ in the Cayley graph of $K$.  It follows that there exists  $C>0$ such that  for all $j > 0$,  the functions  $F$ and $f_j$ agree on   $\mathcal{N}_j := N_L( \{  k^C, k^{C+1}, \ldots, k^{j-C} \})$.  So, as $F$ is $k$-invariant, $f_j$ follows the same repeating pattern as $F$ along $\mathcal{N}_j$---more precisely,  the restrictions of  $f_j$ to  $N_L( k^C)$,  to $N_L( k^{C+1})$, \ldots, and to $N_L( k^{j-C})$ all agree after translations by successive powers of $k$.  And therefore, if  $\supp F \neq \emptyset$,    there exists $\lambda, \mu >0$ such that for all $j>0$ we have $d_G(e, t^j) \geq \lambda j - \mu$, because to achieve the element $t^j \in G$, the lamplighter must visit every one of these neighbourhoods.  So $\hat{H}$ is undistorted in $G$.  And if, on the other hand, $\supp F = \emptyset$, then there exists $\nu >0$ such that for all $j$ and all $g \in K \ssm N_{\nu}(\{ e, k^j\})$,  we have     $f_j(g) =e$.     So $d_G(e, t^j) \leq d_G(e, u^j) + C$  where $u=(\mathbf{0}, k)$.  So $\Dist^{G}_{H} \simeq \Dist^{G}_{\hat{H}}$    where $H := \langle  u \rangle$, which is a subgroup of $K$.

\bibliographystyle{alpha}
\bibliography{bibli}

\def\cprime{$'$}
\begin{thebibliography}{BLP15}

\bibitem[Ble08]{Bleak}
C.~Bleak.
\newblock A geometric classification of some solvable groups of homeomorphisms.
\newblock {\em J. Lond. Math. Soc. (2)}, 78(2):352--372, 2008.

\bibitem[BLP15]{BL-P}
J.~Burillo and E.~L{\'o}pez-Plat{\'o}n.
\newblock Metric properties and distortion in wreath products, 2015.
\newblock
  \texttt{\href{https://arxiv.org/abs/1506.06935}{arxiv.org/abs/1506.06935}}.

\bibitem[BR]{BeauRi}
A.~Beaupr\'e and T.~R. Riley.
\newblock Subgroup distortion in wreath products of abelian groups with free
  groups.
\newblock In preparation.

\bibitem[Dav11]{DavisThesis}
T.~C. Davis.
\newblock {\em Subgroup distortion in metabelian and free nilpotent groups}.
\newblock PhD thesis, Vanderbilt University, 2011.

\bibitem[DO11]{DO}
T.~C. Davis and A.~Yu. Olshanskii.
\newblock Subgroup distortion in wreath products of cyclic groups.
\newblock {\em J. Pure Appl. Algebra}, 215(12):2987--3004, 2011.

\bibitem[GS99]{GubaSapir3}
V.~S. Guba and M.~V. Sapir.
\newblock On subgroups of the {R}. {T}hompson group {$F$} and other diagram
  groups.
\newblock {\em Mat. Sb.}, 190(8):3--60, 1999.

\bibitem[Mih66]{Mihailova}
K.A. Mihailova.
\newblock The occurence problem for direct products of groups.
\newblock {\em Mat. Sbornik}, 70:241--251, 1966.

\bibitem[Neu64]{Neumann}
P.~M. Neumann.
\newblock On the structure of standard wreath products of groups.
\newblock {\em Math. Z.}, 84:343--373, 1964.

\bibitem[Osi01]{OsinDistNilp}
D.~V. Osin.
\newblock Subgroup distortions in nilpotent groups.
\newblock {\em Comm. Algebra}, 29(12):5439--5463, 2001.

\bibitem[Pit93]{Pittet2}
Ch. Pittet.
\newblock Surface groups and quasi-convexity.
\newblock In {\em Geometric group theory, {V}ol. 1 ({S}ussex, 1991)}, volume
  181 of {\em London Math. Soc. Lecture Note Ser.}, pages 169--175. Cambridge
  Univ. Press, Cambridge, 1993.

\bibitem[Sho91]{Short2}
H.~Short.
\newblock Quasiconvexity and a theorem of {H}owson's.
\newblock In {\em Group theory from a geometrical viewpoint ({T}rieste, 1990)},
  pages 168--176. World Sci. Publ., River Edge, NJ, 1991.

\bibitem[Umi95]{Umirbaev}
U.~U. Umirbaev.
\newblock The occurrence problem for free solvable groups.
\newblock {\em Algebra i Logika}, 34(2):211--232, 243, 1995.

\end{thebibliography}


\ni  \textsc{Timothy R.\ Riley} \rule{0mm}{6mm} \\
Department of Mathematics, 310 Malott Hall,  Cornell University, Ithaca, NY 14853, USA \\ \texttt{tim.riley@math.cornell.edu}

      \end{document}